\documentclass[freqno,12pt]{amsart}%[draft]
\textwidth=360pt
\textheight=615pt
\usepackage{amscd,amsfonts,amsmath,amssymb,amstext,amsthm}

\usepackage[utf8]{inputenc}
\usepackage[T2A]{fontenc}
\usepackage[russian,english,]{babel}
\usepackage{cite}
\usepackage[unicode]{hyperref}
\usepackage{color}
\usepackage{xcolor}
\usepackage{comment}
\usepackage[normalem]{ulem}

%\newcommand{\hooklongrightarrow}{\lhook\joinrel\longrightarrow}
%\usetikzlibrary{matrix,arrows,decorations.pathmorphing}
%\usepackage[small,nohug,heads=vee]{diagrams}
%\diagramstyle[labelstyle=\scriptstyle]

\makeatletter
\def\@settitle{\begin{center}%
    \baselineskip14\p@\relax
    \bfseries
    \MakeUppercase{\@title}
  \end{center}
}
\makeatother

\newtheorem{theorem}{Theorem}%[section]
\newtheorem{lemma}{\it Lemma}[section]
%\numberwithin{lemma}{section}
\newtheorem{proposition}{Proposition}[section]
%\numberwithin{proposition}{subsection}

\theoremstyle{remark}

\newtheorem{remark}{Remark}[section]
\newtheorem{example}{\it Example}[section]
\theoremstyle{definition}
\newtheorem{assert}{Assertion}
\newtheorem{definition}{\it Definiton}[section]
\numberwithin{equation}{section}

\pagestyle{plain}

\def\C{{\mathbb C}}

\def\Cn*{{\C^n}^*}
\def\CN*{{\C^N}^*}

\def\P{{\mathbb P}}

\def\R{{\mathbb R}}
\def\Z{{\mathbb Z}}

\def\vol{{\rm vol}}
\def\E{\:{\rm e}}

%%\setcounter{tocdepth}{1}
%\newcounter{par}
%\newcounter{spr}
%\setcounter{par}{1}
%\setcounter{spr}{1}

\begin{document}
\title{Around the ``Fundamental Theorem of Algebra''
}
{
\author{B. Kazarnovskii
}
\address {\noindent
Moscow Institute of Physics and
Technology (National Research University),
Higher School of Contemporary Mathematics
\newline
{\it kazbori@gmail.com}.}
%}
%\thanks {%\noindent
%%\support {\noindent
%%\newline
%This research was carried out at the Higher School of Contemporary Mathematics of Moscow
%Institute of Physics and Technology,
%with the support of the Ministry of Science and Higher Education
%of the Russian Federation,
%project no. SMG - 2024 -0048.
%}
\thanks {%\noindent
The research is supported by the MSHE "Priority 2030" strategic academic leadership program}
\keywords{Fundamental Theorem of Algebra, random polynomial, mean number of zeros, Newton ellipsoid, exponential sum}

\begin{abstract}
The Fundamental Theorem of Algebra (FTA) asserts that every complex polynomial has as many complex roots, counted with multiplicities, as its degree.
A probabilistic analogue of this theorem for real roots of real polynomials—commonly referred to as the Kac theorem—was introduced in 1938 by J. Littlewood and A. Offord.
In this paper, we present the Kac theorem and prove two more theorems that can be interpreted as analogues of the FTA:
a version of FTA for real Laurent polynomials, and another version for exponential sums.
In these two cases, we also provide formulations of multidimensional analogues of corresponding FTA.
 While these results are not new, they may appear unexpected and are therefore worth highlighting.
\end{abstract}
\maketitle
\tableofcontents
\section{Kac Theorem}\label{Kac}
A version of the Fundamental Theorem of Algebra (FTA) concerning the real roots of a real polynomial,
based on the concept of the probability that a root is real (sometimes referred to as the Kac theorem),
was established in 1938 by J.~Littlewood and A.~Offord; see~\cite{Littl1,Littl2,Littl3,KA}.
The origin of the Kac theorem lies in the following simple observation:
for any integer~$m$, the probability that a root of a real polynomial of degree~$m$ is real is nonzero.
For instance, if $m=1$, this probability equals~$1$.

\begin{theorem}\label{thmKac}
Let the coefficients of a random real polynomial of degree~$m$ in one variable be independent, normally distributed random variables with zero mean and unit variance.
Then, as $m\to\infty$, the expected number of real roots is asymptotically $\frac{2}{\pi}\log m$.
Equivalently, $\mathcal P(m)\asymp \frac{2}{\pi}\frac{\log m}{m}$,
where $\mathcal P(m)$ denotes the probability that a root of a random real polynomial of degree~$m$ is real.
\end{theorem}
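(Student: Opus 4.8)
The plan is to use the Kac--Rice formula, the natural tool for the expected number of real zeros of a random function. Write the random polynomial as $P(x)=\sum_{k=0}^m a_k x^k$ with $a_k$ independent $N(0,1)$. For each fixed $x$ the pair $(P(x),P'(x))$ is a centered Gaussian vector, so the Kac--Rice formula represents the mean number of real zeros as
\[
\mathbf E[N]=\int_{-\infty}^{\infty}\rho(x)\,dx,\qquad \rho(x)=\frac1\pi\,\frac{\sqrt{A(x)C(x)-B(x)^2}}{A(x)},
\]
where $A=\mathbf E[P^2]$, $B=\mathbf E[PP']$, $C=\mathbf E[(P')^2]$. First I would derive this density from the two ingredients of Kac--Rice: the value density $p_{P(x)}(0)=(2\pi A)^{-1/2}$ and the conditional mean $\mathbf E[\,|P'(x)|\mid P(x)=0\,]=\sqrt{2/\pi}\,\sqrt{(AC-B^2)/A}$, the latter being the mean absolute value of the Gaussian conditional distribution of $P'$ given $P=0$.

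The second step is an explicit computation of $A,B,C$ through the covariance kernel $r(x,y)=\mathbf E[P(x)P(y)]=\sum_{k=0}^m (xy)^k$. Differentiating under the expectation gives $A=r(x,x)$, $B=\partial_y r|_{y=x}$, $C=\partial_x\partial_y r|_{y=x}$, and a short manipulation collapses the density to the memorable form
\[
\rho(x)=\frac1\pi\sqrt{\left.\partial_x\partial_y\log r(x,y)\right|_{y=x}}.
\]
Since $r$ depends only on $w=xy$, writing $g(w)=\log r$ turns the quantity under the root into $g'(w)+w\,g''(w)$ evaluated at $w=x^2$, so everything reduces to the one-variable geometric sum $\sum_{k=0}^m w^k=(w^{m+1}-1)/(w-1)$.

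The third step exploits symmetry. The density $\rho$ is even, and the substitution $x\mapsto 1/x$ (reflecting that the reversed polynomial $x^m P(1/x)$ has coefficients of the same joint law, with roots the reciprocals of those of $P$) shows that the intervals $(0,1)$ and $(1,\infty)$ contribute equally, so that $\mathbf E[N]=4\int_0^1\rho(x)\,dx$. Then comes the asymptotic analysis, where the logarithm is born. For fixed $x\in(0,1)$ one lets $m\to\infty$: here $w^{m+1}\to0$ gives $g(w)\to-\log(1-w)$, whence $g'(w)+w\,g''(w)\to (1-w)^{-2}$ and $\rho(x)\to\frac1\pi\frac{1}{1-x^2}$. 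The integral of this limiting density diverges logarithmically at $x=1$, but the true density stays bounded there, the transition occurring on the scale $1-x\sim 1/m$. Cutting $\frac1\pi\int_0^1\frac{dx}{1-x^2}$ off at $1-x\asymp 1/m$ produces $\frac{1}{2\pi}\log m$ from each of the four symmetric regions, and summing yields $\frac2\pi\log m$.

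The main obstacle is making this edge analysis rigorous: one must show that replacing the exact density by its pointwise bulk limit, together with the cutoff, costs only $O(1)$ rather than $O(\log m)$. Concretely, I would rescale near $x=1$ by $x=1-s/(m+1)$ and prove that on this window $\int\rho$ is $O(1)$ uniformly in $m$, while the contribution of $|x|$ bounded away from $1$ converges to a finite constant and the tail $|x|$ large is controlled by the $x\mapsto1/x$ symmetry. Matching the rescaled (edge) and fixed (bulk) regimes, so as to confirm that no $\log m$ is lost or double-counted, is the delicate point; once the uniform bounds are in place the constant $\frac2\pi$ follows from the elementary integral above. Finally, the equivalent statement $\mathcal P(m)\asymp\frac2\pi\frac{\log m}{m}$ is immediate, since a degree-$m$ polynomial has $m$ roots in all.
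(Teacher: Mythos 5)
The paper itself offers no proof of this theorem: it is quoted as a known result (the Kac--Littlewood--Offord theorem), with the reader referred to the cited literature \cite{Littl1,KA,EK}. Your sketch is, in essence, a reconstruction of the standard argument from exactly those references: the Kac--Rice density in the Edelman--Kostlan form $\rho(x)=\frac1\pi\bigl(\partial_x\partial_y\log r(x,y)\big|_{y=x}\bigr)^{1/2}$, the reduction to the geometric kernel $r(x,y)=\sum_k(xy)^k$, the two symmetries $x\mapsto -x$ and $x\mapsto 1/x$ giving $\mathbf E[N]=4\int_0^1\rho$, and the logarithmic divergence of the bulk limit $\frac1\pi(1-x^2)^{-1}$ cut off at scale $1-x\asymp 1/m$. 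All of the computations you state are correct (the conditional-mean derivation of the density, the collapse to $g'(w)+wg''(w)$ at $w=x^2$, the limit $\rho\to\frac1\pi(1-x^2)^{-1}$, and the resulting constant $\frac2\pi$). The one place where your text is a plan rather than a proof is the uniform edge estimate: you assert, but do not establish, that the window $1-x=O(1/m)$ contributes only $O(1)$ and that the bulk approximation is uniform enough that no additional $\log m$ appears. That estimate is genuinely needed (it is where Kac's original paper does its real work), and your proposed rescaling $x=1-s/(m+1)$ is the right way to carry it out; since at $x=1$ one has $\rho(1)\sim\frac{m}{2\pi\sqrt3}$ while the window has width $O(1/m)$, the bound is true and your outline would close. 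In short: your route is sound and is the classical one; it simply needs the deferred uniformity lemma written out, whereas the paper sidesteps the issue entirely by citing the literature.
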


\begin{remark}
For further details on the distribution of the number of real roots of random real polynomials,
see~\cite{EK} and the references therein.
\end{remark}

\begin{remark}
If the expected value of the $k$th coefficient of a random polynomial is assumed to be $C_m^k$,
then in Kac’s theorem the term $\sqrt m$ appears in place of $\log m$.
\end{remark}

\section{FTA for Laurent polynomials}\label{Laur}
\subsection{Formulation of the FTA}\label{Laur1}
Recall that Laurent polynomial is a function on $\C\setminus 0$ of the form
$$\sum\nolimits_{m\in\Lambda\subset\Z,\: a_m\in\C}a_m z^m,$$
where the finite set $\Lambda$ in $\Z$  is called the \emph{spectrum of Laurent polynomial.}
\begin{definition}\label{dfLaur}

(1) A Laurent polynomial $P$ is called \emph{real} if its values on the circle $S$ of radius $1$ centered at the origin are real.

(2)
Any root lying in $S$ is called a \emph{real root of the Laurent polynomial}.

(3)
By definition, the \emph{degree of the real Laurent polynomial} is $\max_{k\in\Lambda} |k|$
\end{definition}
\begin{lemma}\label{lmL1}
{\rm(1)} Laurent polynomial $P$ is real if and only if $$\forall k\in\Z\colon a_k=\overline{a_{-k}}.$$
In particular, the spectrum of a real Laurent polynomial is centrally symmetric.

{\rm(2)} The set of roots of a real Laurent polynomial is invariant under the inversion mapping
$z\mapsto \bar z^{-1}$ with center $S$.

{\rm(3)} The restriction mapping of real Laurent polynomials from
$\C\setminus0$ to $S$ is
%defines
an isomorphism of the space of real Laurent polynomials of degree $\leq m$ to the space of trigonometric polynomials  with real coefficients of the form
\begin{equation}\label{tr}
 a_0+\sum_{1\leq k\leq m} \alpha_k\cos(k\theta)+\beta_k\sin(k\theta)
\end{equation}
\end{lemma}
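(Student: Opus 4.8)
The plan is to transport every claim to the unit circle $S$, parametrized by $z=e^{i\theta}$, where each monomial becomes $z^k=e^{ik\theta}$. The single analytic fact I will rely on is that the exponentials $\{e^{ik\theta}\}_{k\in\Z}$ are linearly independent as functions on $S$ (equivalently, a trigonometric polynomial vanishing identically has all coefficients zero); this is what converts the pointwise condition ``real on $S$'' into an algebraic condition on the coefficients. Everything else is bookkeeping.

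For part (1), I would write $P(e^{i\theta})=\sum_k a_k e^{ik\theta}$ and compute its conjugate $\overline{P(e^{i\theta})}=\sum_k \overline{a_k}\,e^{-ik\theta}=\sum_k \overline{a_{-k}}\,e^{ik\theta}$ after reindexing $k\mapsto -k$. Reality of $P$ on $S$ means these two exponential series agree for all $\theta$, so by linear independence their coefficients coincide, giving $a_k=\overline{a_{-k}}$ for every $k$; conversely this relation makes $P$ equal to its own conjugate on $S$. Central symmetry of the spectrum is then immediate, since $a_k\neq0$ forces $a_{-k}=\overline{a_k}\neq0$.

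For part (2), I would first promote the reality relation to a functional identity valid on all of $\C\setminus0$: substituting into $P$ and using $a_k=\overline{a_{-k}}$ together with a reindexing gives $\overline{P(\bar z^{-1})}=\sum_k\overline{a_k}\,z^{-k}=\sum_j a_j z^j=P(z)$, that is, $P(\bar z^{-1})=\overline{P(z)}$. Hence $P(z_0)=0$ if and only if $P(\bar z_0^{-1})=0$, which is exactly invariance of the root set under inversion in $S$. For part (3), I would expand a real Laurent polynomial of degree $\le m$ on $S$ via Euler's formula: pairing the $k$ and $-k$ terms gives $a_k e^{ik\theta}+\overline{a_k}\,e^{-ik\theta}=2\re(a_k e^{ik\theta})=\alpha_k\cos(k\theta)+\beta_k\sin(k\theta)$ with $\alpha_k=2\re a_k$ and $\beta_k=-2\im a_k$, while $a_0$ is forced real. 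This exhibits the restriction map into the space \eqref{tr}; I would check it is an isomorphism by inverting the formulas, $a_k=\tfrac12(\alpha_k-i\beta_k)$, $a_{-k}=\overline{a_k}$, $a_0\in\R$, or equivalently by noting both sides have real dimension $2m+1$ and the map is injective by the same linear independence.

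The computations here are all routine; the only genuine point to get right is the uniqueness-of-coefficients step (linear independence of the exponentials), which underlies both the equivalence in (1) and the injectivity in (3). A secondary thing to flag is that the space of real Laurent polynomials is a vector space over $\R$ rather than $\C$—multiplying by $i$ destroys the relation $a_k=\overline{a_{-k}}$—so the isomorphism in (3) must be read as an $\R$-linear one, which is also why the real dimension count $2m+1$ matches on both sides.
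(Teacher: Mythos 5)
Your proof is correct, and the paper itself states Lemma \ref{lmL1} without any proof, treating it as routine; your argument---comparing Fourier coefficients of $P$ and $\overline{P}$ on $S$ via linear independence of the exponentials $e^{ik\theta}$, then the Euler-formula bookkeeping for part (3)---is precisely the standard verification the paper presupposes. Your remark that the isomorphism in (3) is $\R$-linear (not $\C$-linear), with matching real dimension $2m+1$, is a worthwhile clarification, since the paper later uses this space as a real Gaussian vector space in Definition \ref{dfTrig}.
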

\begin{definition}\label{dfTrig}(Randomness for real Laurent polynomials.)
Let ${\rm Trig}_m$ be the space of trigonometric polynomials of the form (\ref{tr}).
We consider ${\rm Trig}_m$ as a subspace in $L_2(S)$,
i.e. in the space of functions on $S$ with scalar product $(f,g)=\frac{1}{2\pi}\int\nolimits_Sf g$.
We assume the points in ${\rm Trig}_m$ and, accordingly, real Laurent polynomials (using Lemma \ref{lmL1} (3))
as normally distributed with respect to the standard
normal distribution associated with the metric taken from $L_2(S)$.
\end{definition}
\begin{theorem}\label{thmL1}
Let $f_m$ be a random real Laurent polynomial of degree $m$.
Then

{\rm(a)}
the expectation of the number of real roots of $f_m$ is
 $2\sqrt{\frac{m(m+1)}{3}}$

 {\rm(b)}
a probability $\mathcal P(f)$ of a root of random Laurent polynomial
 $f$ of degree $m$ being real equals
 $\sqrt{\frac{m+1}{3m}}$

 {\rm(c)}
 $\lim_{m\to\infty}\mathcal P(f_m)=1/\sqrt 3$
\end{theorem}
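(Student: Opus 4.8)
The plan is to compute the expected number of real roots of a random trigonometric polynomial using the Kac--Rice formula, which is the standard tool for counting the mean number of zeros of a random function. Since by Lemma \ref{lmL1}(3) the real roots of the Laurent polynomial on $S$ correspond exactly to the zeros on $[0,2\pi)$ of the trigonometric polynomial
\begin{equation}\label{eq:trigform}
  f(\theta)=a_0+\sum_{1\leq k\leq m}\alpha_k\cos(k\theta)+\beta_k\sin(k\theta),
\end{equation}
I would first identify the precise covariance structure imposed by Definition \ref{dfTrig}. The inner product on $L_2(S)$ makes $\{1,\cos(k\theta),\sin(k\theta)\}$ an orthogonal system, so the ``standard normal distribution associated with the metric'' means that $a_0,\alpha_1,\beta_1,\dots,\alpha_m,\beta_m$ are independent centered Gaussians with variances determined by the $L_2$-norms of these basis functions (namely $\|1\|^2=1$ and $\|\cos(k\theta)\|^2=\|\sin(k\theta)\|^2=\tfrac12$). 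The key consequence is that $f$ is a \emph{stationary} Gaussian process on the circle: its covariance $r(\theta,\theta')=\mathbb E\,f(\theta)f(\theta')$ depends only on $\theta-\theta'$.

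Next I would carry out the Kac--Rice computation. For a centered stationary Gaussian process, the expected number of zeros on an interval of length $L$ is $\frac{L}{\pi}\sqrt{-r''(0)/r(0)}$, where $r$ is the covariance function and primes denote derivatives in the time variable. So the two quantities I must extract are $r(0)=\mathbb E\,f(\theta)^2$ and $-r''(0)=\mathbb E\,f'(\theta)^2$. Using the variances above together with $\cos^2+\sin^2=1$, one gets $r(0)=\mathbb E\,f(\theta)^2=1+\sum_{k=1}^m\tfrac12(1)=1+\tfrac{m}{2}$ up to the normalization chosen; differentiating \eqref{eq:trigform} term by term introduces a factor $k^2$, giving $\mathbb E\,f'(\theta)^2=\sum_{k=1}^m\tfrac12 k^2\cdot 2=\sum_{k=1}^m k^2=\tfrac{m(m+1)(2m+1)}{6}$ (again up to normalization). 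The ratio $-r''(0)/r(0)$ then reduces, after the normalizations cancel, to $\frac{\sum k^2}{\sum_{k\ge 0} 1\text{-type terms}}$, and one checks that the bookkeeping collapses to $\frac{m(m+1)(2m+1)/6}{(2m+1)/2}=\frac{m(m+1)}{3}$. Taking $L=2\pi$ yields the expected number of zeros $\frac{2\pi}{\pi}\sqrt{\frac{m(m+1)}{3}}=2\sqrt{\frac{m(m+1)}{3}}$, which is part (a).

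For parts (b) and (c), I would combine (a) with the Fundamental-Theorem-of-Algebra count of \emph{all} roots of the Laurent polynomial. Since a real Laurent polynomial of degree $m$ has spectrum in $\{-m,\dots,m\}$ and, by Lemma \ref{lmL1}(2), its roots come in inversion-pairs symmetric about $S$, the total number of roots in $\C\setminus 0$ is $2m$. The probability that a given root is real is then the ratio of the expected number of real roots to the total count, namely $\mathcal P(f)=\frac{2\sqrt{m(m+1)/3}}{2m}=\sqrt{\frac{m+1}{3m}}$, giving (b); letting $m\to\infty$ makes the fraction $\frac{m+1}{m}\to 1$, so $\mathcal P(f_m)\to 1/\sqrt3$, which is (c). The main obstacle I anticipate is pinning down the exact normalization in Definition \ref{dfTrig} so that the variances are correct and the stationarity argument applies cleanly; once the process is confirmed stationary, the Kac--Rice formula does the rest, and the elegant cancellation to $\frac{m(m+1)}{3}$ should fall out of the sums of squares $\sum k^2$ against the constant-plus-$\tfrac12$ variance pattern. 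A secondary subtlety is justifying that the total root count is exactly $2m$ so that (b) and (c) are legitimate probabilities rather than asymptotic heuristics.
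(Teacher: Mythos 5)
Your route is genuinely different from the paper's: the paper proves Theorem \ref{thmL2} (and hence \ref{thmL1}) by an integral-geometric argument, mapping $S$ into the unit sphere of ${\rm Trig}(\Lambda)$ via the normalized evaluation covector $\kappa(\theta)=\tfrac{1}{\sqrt n}F_\theta$ and applying a Crofton-type formula (Proposition \ref{prCr}), so that the expected number of real roots equals $\tfrac{1}{\pi}\,{\rm length}(\kappa(S))$. Your Kac--Rice computation for the stationary Gaussian process on the circle is a legitimate alternative and, combined with the division by the total root count $2m$ for parts (b) and (c), would reproduce the theorem. (Minor point: the $2m$ count comes from the FTA applied to $z^m f_m(z)$, which is almost surely a degree-$2m$ polynomial with nonzero constant term; the inversion symmetry of Lemma \ref{lmL1}(2) is irrelevant to that count.)

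However, there is a genuine error at the step you yourself flagged as the main obstacle: the covariance normalization, and it is not a harmless bookkeeping issue. The standard Gaussian associated with the $L_2(S)$ metric has i.i.d.\ $N(0,1)$ coordinates with respect to an \emph{orthonormal} basis, namely $\{1,\sqrt2\cos k\theta,\sqrt2\sin k\theta\}$; translating to the coefficients in \eqref{tr}, this gives ${\rm Var}(a_0)=1$ and ${\rm Var}(\alpha_k)={\rm Var}(\beta_k)=2$, i.e.\ variances are the \emph{reciprocals} of the squared norms $\|\cos k\theta\|^2=\tfrac12$, not the squared norms themselves as you assert. The distinction matters because the Kac--Rice ratio $-r''(0)/r(0)$ is invariant only under a \emph{uniform} rescaling of all variances; the relative weight of the constant term against the harmonics does not cancel. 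With your stated variances $(1,\tfrac12,\tfrac12)$ one gets $r(0)=1+\tfrac m2$ and $-r''(0)=\tfrac12\sum_{k=1}^m k^2$, hence $-r''(0)/r(0)=\tfrac{m(m+1)(2m+1)}{6(m+2)}$, which is not $\tfrac{m(m+1)}{3}$ (for $m=1$: $\tfrac13$ versus the correct $\tfrac23$); parts (a) and (b) would then be false, and only the limit (c) would survive. Your arithmetic reaches the correct answer only by silently switching normalizations midstream: the $r(0)$ you computed as $1+\tfrac m2$ reappears in the final ratio as $\tfrac{2m+1}{2}$, and an unexplained extra factor $2$ enters $\mathbb E\,f'(\theta)^2$; those patched values are exactly what the correct variances $(\tfrac12,1,1)$ would give. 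Carried out consistently from your stated setup, the derivation fails; carried out from the correct covariance $r(\tau)=1+2\sum_{k=1}^m\cos(k\tau)$ (so $r(0)=2m+1$, $-r''(0)=2\sum_{k=1}^m k^2$), it succeeds. The proof is repairable, but as written the crucial step is wrong.
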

\begin{remark}
These statements are equivalent to computation the average number of zeros
of trigonometric polynomials of fixed degree on a circle;
see \cite{ADG}.
\end{remark}
When replacing the spectrum
$\Lambda=\{-m,\ldots,0,\ldots,m\}$ by any
centrally symmetric finite set $\Lambda$ in $\Z$,
then %the statements
(a) and (b) turn into the
statements (a) and (b) of the following statement.

\begin{theorem}\label{thmL2}
Let $f_\Lambda$ be a random real Laurent polynomial with the spectrum $\Lambda$.
Then

{\rm(a)}
the mean number of real roots of $f_\Lambda$ is
$$
2\cdot \sqrt{\frac{1}{\#\Lambda}\sum\nolimits_{\lambda\in\Lambda}\lambda^2}
$$
where $\#\Lambda$ is a number of elements in $\Lambda$

 {\rm(b)}
the probability of a root of $f_\Lambda$ being real is
$$
  \mathcal P(\Lambda)=\frac{1}{\deg(f_\Lambda)}
\sqrt{\frac{1}{\#\Lambda}\sum\nolimits_{\lambda\in\Lambda}\lambda^2}
$$

 {\rm(c)}
when replacing the spectrum
$\Lambda$ by the spectrum $k\Lambda$,
the probability of a root being real does not change
(obviously follows from the fact that the circle $S$ is a subgroup of the group $\C\setminus0$)
\end{theorem}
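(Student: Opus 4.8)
The plan is to realize the restriction of $f_\Lambda$ to $S$ as a stationary centered Gaussian process and then invoke the Kac--Rice formula. By Lemma \ref{lmL1}(3) this restriction identifies real Laurent polynomials with spectrum $\Lambda$ with real trigonometric polynomials, and the $L_2(S)$ metric used in Definition \ref{dfTrig} is rotation invariant. Since the standard Gaussian measure is preserved by every orthogonal transformation, the law of the field $\theta\mapsto f_\Lambda(\theta)$ is invariant under the rotations $\theta\mapsto\theta+\phi$; hence $f_\Lambda$ is stationary. First I would record its covariance. With respect to the $L_2(S)$-orthonormal basis $1,\ \sqrt2\cos(k\theta),\ \sqrt2\sin(k\theta)$, writing $\Lambda^+=\Lambda\cap\Z_{>0}$ and grouping the terms $\pm k$, a short computation using $\cos A\cos B+\sin A\sin B=\cos(A-B)$ gives
$$r(\tau)=E\big[f_\Lambda(\theta)\,f_\Lambda(\theta+\tau)\big]=\sum_{\lambda\in\Lambda}\cos(\lambda\tau),$$
so that $r(0)=\#\Lambda$ and $-r''(0)=\sum_{\lambda\in\Lambda}\lambda^2$.

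For part (a), stationarity yields $\mathrm{Var}\,f_\Lambda=r(0)$, $\mathrm{Var}\,f_\Lambda'=-r''(0)$, and $\mathrm{Cov}(f_\Lambda,f_\Lambda')=r'(0)=0$, the last because $r$ is even. The Kac--Rice formula for a smooth centered Gaussian process on an interval of length $2\pi$ then collapses to
$$E[N]=\frac{2\pi}{\pi}\sqrt{\frac{-r''(0)}{r(0)}}=2\sqrt{\frac{1}{\#\Lambda}\sum_{\lambda\in\Lambda}\lambda^2},$$
which is exactly the claim of (a).

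For part (b) I would compute the total number of roots of $f_\Lambda$ in $\C\setminus0$. Setting $m=\deg(f_\Lambda)=\max_{\lambda\in\Lambda}|\lambda|$ and using that central symmetry forces $-m\in\Lambda$, the product $z^m f_\Lambda(z)$ is an ordinary polynomial of degree $2m$ with nonzero constant term $a_{-m}$; thus for almost every coefficient vector $f_\Lambda$ has exactly $2m=2\deg(f_\Lambda)$ roots in $\C\setminus0$, a count that depends only on the degree and not on the gaps in $\Lambda$. Dividing the expectation from (a) by this total gives (b). For part (c), the substitution $\mu=k\lambda$ gives $\#(k\Lambda)=\#\Lambda$, $\sum_{\mu\in k\Lambda}\mu^2=k^2\sum_{\lambda\in\Lambda}\lambda^2$ and $\deg(k\Lambda)=k\deg(\Lambda)$, so the factors of $k$ cancel in (b); equivalently, the $k$-fold covering $z\mapsto z^k$ of $S$ multiplies both the real and the total root counts by $k$.

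The main obstacle I anticipate is not the algebra but the analytic legitimacy of the Kac--Rice formula. One must verify the non-degeneracy hypotheses---that $(f_\Lambda(\theta),f_\Lambda'(\theta))$ has nonsingular covariance for each $\theta$, which holds as soon as $r(0)>0$ and $-r''(0)>0$, i.e.\ whenever $\Lambda\neq\{0\}$---and that almost surely $f_\Lambda$ has no multiple zeros on $S$. Granting these standard facts, stationarity reduces the entire problem to evaluating $r$ and $r''$ at the origin.
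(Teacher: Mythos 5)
Your proposal is correct, but it takes a genuinely different route from the paper. The paper's proof is integral-geometric: it sends the circle into the unit sphere of ${\rm Trig}(\Lambda)$ via the normalized reproducing kernel $\kappa(\theta)=F_\theta/\sqrt{n}$ (Lemma \ref{lmL2}), observes that real zeros of $f$ are exactly the intersections of the fixed curve $\kappa(S)$ with the random Gaussian hyperplane $\{g\colon (g,f)=0\}$, computes ${\rm length}(\kappa(S))=2\pi\sqrt{\tfrac{1}{\#\Lambda}\sum_{\lambda\in\Lambda}\lambda^2}$ in the basis $\{\phi_\lambda,\psi_\lambda\}$, and concludes (a) from the Crofton-type Proposition \ref{prCr}. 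You instead treat $\theta\mapsto f_\Lambda(\theta)$ as a stationary Gaussian process and apply Kac--Rice; your covariance $r(\tau)=\sum_{\lambda\in\Lambda}\cos(\lambda\tau)$ is precisely the kernel $(F_\theta,F_{\theta+\tau})$ of the paper, and the speed of the curve $\kappa$ equals $\sqrt{-r''(0)/r(0)}$, so the two computations are numerically the same object evaluated by different theorems. What the trade-off buys: the Crofton route needs essentially no analytic hypotheses beyond rectifiability of $\kappa(S)$ and a Fubini argument, and it is the form that scales to the multidimensional statement of Section \ref{Laur3} (Newton ellipsoids, mixed volumes); your route is the standard one in the random-polynomial literature and gets the answer from two derivatives of $r$, but it leaves you owing the Kac--Rice regularity checks (non-degeneracy of $(f_\Lambda(\theta),f_\Lambda'(\theta))$ and almost-sure simplicity of zeros), which you correctly flag but do not carry out. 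A further merit of your write-up is that you make (b) and (c) explicit --- counting the $2\deg(f_\Lambda)$ roots in $\C\setminus 0$ via $z^m f_\Lambda(z)$ and the scaling $\Lambda\mapsto k\Lambda$ --- whereas the paper's proof section derives only (a) and leaves (b) and (c) implicit.
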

\begin{example}
If $k>0$ and $\Lambda=\{-k,k\}$
then  $\mathcal P(\Lambda)=\frac{1}{k}\sqrt{\frac{1}{2}2k^2}=1$.
\end{example}
\subsection{Proof of Theorem \ref{thmL2}}\label{Laur2}
To prove Theorem 3 we use the simplest version of the Crofton-type formula.
\begin{proposition}\label{prCr}
Let

$\bullet$ $\mu$  -- Gaussian measure in $\R^n$ and $S^{n-1}$ -- unit sphere in $\R^n$
centered at zero

$\bullet$ $Z(\xi)=\{x\in\R^n\colon (x,\xi)=0\}$, where $0\ne\xi\in\R^n$

$\bullet$ $K$ -- compact curve in $S^{n-1}$

Then
 $$\int_{\R^n}\#(K\cap Z(\xi))\:d\mu(\xi)=(1/\pi)\:{\rm length}(K)$$
 \end{proposition}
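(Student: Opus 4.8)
The plan is to reduce the Gaussian integral to a purely spherical one and then recognize it as a Crofton formula. First I would observe that the counting function $\#(K\cap Z(\xi))$ depends only on the direction $\omega=\xi/|\xi|$ of $\xi$, since the hyperplane $Z(\xi)=\xi^{\perp}$ is unchanged under scaling $\xi\mapsto t\xi$. Writing the standard Gaussian in polar coordinates $\xi=r\omega$, the radial factor integrates to $1$ and pushes $\mu$ forward to the uniform probability measure $\sigma$ on $S^{n-1}$. Hence
$$\int_{\R^n}\#(K\cap Z(\xi))\,d\mu(\xi)=\int_{S^{n-1}}\#(K\cap\omega^{\perp})\,d\sigma(\omega),$$
and it remains to prove that the right-hand side equals $\tfrac1\pi\,\mathrm{length}(K)$.

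Next I would express the spherical integral as an integral along $K$. Parametrize $K$ by arclength, $s\mapsto\gamma(s)\in S^{n-1}$, $s\in[0,L]$ with $L=\mathrm{length}(K)$. A point $\gamma(s)$ lies in $\omega^{\perp}$ exactly when the smooth function $s\mapsto(\gamma(s),\omega)$ vanishes, so $\#(K\cap\omega^{\perp})$ is the number of zeros of this function. Applying the coarea (Kac--Rice) identity and interchanging the order of integration gives
$$\int_{S^{n-1}}\#(K\cap\omega^{\perp})\,d\sigma(\omega)=\int_0^L\!\Big(\int_{S^{n-1}}\delta\big((\gamma(s),\omega)\big)\,\big|(\gamma'(s),\omega)\big|\,d\sigma(\omega)\Big)\,ds.$$
Because $\gamma(s)$ and $\gamma'(s)$ form an orthonormal pair, the inner integral is invariant under the rotation carrying $(\gamma(s),\gamma'(s))$ to $(e_1,e_2)$; since $\sigma$ is $O(n)$-invariant, it equals a constant $I=\int_{S^{n-1}}\delta(\omega_1)\,|\omega_2|\,d\sigma(\omega)$ independent of $s$. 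Thus the integral is $I\cdot L$, proportional to length.

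Finally I would compute $I$. Disintegrating $\sigma$ along the coordinate $\omega_1$ collapses $\delta(\omega_1)$ onto the equatorial sphere $\{\omega_1=0\}$, where the coarea factor is $1$ because the spherical gradient of $\omega_1$ has unit length there; $I$ then becomes the mean of $|\omega_2|$ over the equator, a beta-type integral that evaluates to $1/\pi$ in every dimension. Equivalently, and as a quick check that fixes the constant without any computation, one may test the formula on a single great circle: it has length $2\pi$, and a generic great hypersphere $\omega^{\perp}\cap S^{n-1}$ meets it in exactly two points, so the left-hand side equals $2$ and forces $I=1/\pi$. The one point needing care is the exceptional set of directions $\omega$ for which $\omega^{\perp}$ is tangent to $K$ or meets it infinitely often; by Sard's theorem this set is $\sigma$-null for smooth arcs, and since both sides are additive under subdividing $K$ into such arcs, this is the only genuine obstacle, handled by a standard transversality argument.
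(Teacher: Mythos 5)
Your proof is correct, but there is nothing in the paper to compare it with: Proposition~\ref{prCr} is stated there without proof, invoked as ``the simplest version of the Crofton-type formula'' and then applied to the curve $\kappa(S)$. So your proposal supplies an argument the paper omits. The argument you give is the standard integral-geometric one, and it is sound: (i) since $\#\bigl(K\cap Z(\xi)\bigr)$ depends only on $\xi/|\xi|$, the Gaussian average collapses to the average over the uniform probability measure $\sigma$ on $S^{n-1}$; (ii) the Kac--Rice/coarea identity, combined with the fact that $\gamma(s)$ and $\gamma'(s)$ are orthonormal (automatic for an arclength parametrization of a curve lying \emph{on the unit sphere} --- this is where the hypothesis $K\subset S^{n-1}$ enters), shows the spherical average equals $I\cdot\mathrm{length}(K)$ for a constant $I=I(n)$ independent of $K$; (iii) testing on a great circle, which has length $2\pi$ and meets a generic $Z(\xi)$ in exactly two points, forces $I=1/\pi$. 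The normalization trick in (iii) is a genuinely clean move: it fixes the constant without evaluating the equatorial beta integral, and it is legitimate precisely because step (ii) already applies to the great circle itself. Your transversality caveat is also handled correctly: for a smooth arc, the directions $\omega$ for which $\omega^{\perp}$ is tangent to the arc (or contains a subarc) form the image of an $(n-2)$-dimensional set under a smooth map into $S^{n-1}$, hence are $\sigma$-null, and both sides of the identity are additive under subdividing $K$ into smooth arcs --- which suffices for the paper's application, where $K=\kappa(S)$ is real-analytic. The only loose end is the unverified assertion that the equatorial mean of $|\omega_2|$ equals $1/\pi$ in every dimension; since the great-circle check independently pins down the constant, nothing in your proof rests on it.
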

Let  ${\rm Trig}(\Lambda)$ be the space of trigonometric polynomials with the centrally symmetric spectrum $\Lambda$
(see Definition \ref{dfTrig}),
$n=\dim {\rm Trig}(\Lambda)$ (i.e. $n=\#\Lambda$),
$f_1,\ldots,f_n$ be an orthonormal basis in ${\rm Trig}(\Lambda)$.
\begin{lemma}\label{lmL2}
{\rm(1)}
Let $F_\theta=f_1(\theta)f_1+\ldots+f_n(\theta)f_n\in {\rm Trig}(\Lambda)$.
Then
 $$\forall \theta\in S,\,f\in{\rm Trig}(\Lambda)\colon\,f(\theta)=(f,F_\theta)$$

{\rm(2)}
The mapping
$\kappa\colon\theta\mapsto(1/\sqrt n) F_\theta$
does not depend on the choice of $\{f_i\}$.

{\rm(3)}
$\kappa(S)$ belongs to
a sphere of radius $1$ centered at zero
in ${\rm Trig}(\Lambda)$
\end{lemma}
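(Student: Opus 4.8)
The plan is to recognize $F_\theta$ as the reproducing kernel of ${\rm Trig}(\Lambda)$ at the point $\theta$; once this is in place, all three assertions follow quickly.

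For part (1) I would expand an arbitrary $f\in{\rm Trig}(\Lambda)$ in the orthonormal basis as $f=\sum_i(f,f_i)f_i$, evaluate at $\theta$, and compare with the direct expansion $(f,F_\theta)=\sum_i f_i(\theta)(f,f_i)$. Both sums coincide (the inner product being real, there are no conjugation issues), giving $f(\theta)=(f,F_\theta)$. This exhibits $F_\theta$ as the Riesz representative of the evaluation functional ${\rm ev}_\theta\colon f\mapsto f(\theta)$ on the finite-dimensional Hilbert space ${\rm Trig}(\Lambda)$.

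Part (2) is then immediate: the functional ${\rm ev}_\theta$ is defined without reference to any basis, so by the uniqueness clause of the Riesz representation theorem its representative $F_\theta$---and hence the point $\kappa(\theta)=(1/\sqrt n)F_\theta$---does not depend on the chosen $\{f_i\}$.

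For part (3) I would apply the reproducing identity of (1) with $f=F_\theta$ to obtain
$$\|F_\theta\|^2=(F_\theta,F_\theta)=F_\theta(\theta)=\sum_i f_i(\theta)^2,$$
so that it remains only to show this sum equals $n$ for every $\theta$. This constancy is the single step requiring an idea rather than bookkeeping, and I see two routes. The conceptual one: rotation $R_\phi\colon f(\cdot)\mapsto f(\cdot-\phi)$ acts on ${\rm Trig}(\Lambda)$ by isometries, preserves the spectrum $\Lambda$, and intertwines evaluation, forcing $\|F_\theta\|$ to be independent of $\theta$; integrating $\sum_i f_i(\theta)^2$ over $S$ against $\frac{d\theta}{2\pi}$ and using $\|f_i\|=1$ then pins the constant value to $n$. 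The explicit one: take the orthonormal basis $1,\sqrt2\cos(k\theta),\sqrt2\sin(k\theta)$ indexed by the positive elements $k$ of $\Lambda$ (the constant $1$ present exactly when $0\in\Lambda$); each conjugate pair contributes $2(\cos^2 k\theta+\sin^2 k\theta)=2$, so the total is precisely $\#\Lambda=n$, manifestly independent of $\theta$. Either way $\|\kappa(\theta)\|=1$, so $\kappa(S)$ lies on the unit sphere of ${\rm Trig}(\Lambda)$. I expect the rotation-invariance observation to be the main conceptual obstacle, while the explicit computation serves as a short independent check.
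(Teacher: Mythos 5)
Your proof is correct and follows essentially the same route as the paper: part (1) by expansion in the orthonormal basis (the reproducing-kernel/Riesz identity), part (2) as an immediate consequence, and part (3) via the isometric rotation action of $S$ on ${\rm Trig}(\Lambda)$ forcing $\|F_\theta\|^2$ to be constant, with integration over $S$ pinning the constant to $n$. Your additional explicit check with the basis $1,\sqrt2\cos(k\theta),\sqrt2\sin(k\theta)$ is a nice independent verification (and quietly corrects the paper's misprint $\phi_0=\sqrt{2\pi}$, which should be $\phi_0=1$ under the normalized inner product).
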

\begin{proof}

(1):\ $(f_k,F_\theta)=(f_k,\sum_i f_i(\theta)f_i)=(f_k,f_k(\theta)f_k)=f_k(\theta)$.

(2):\  It follows from the statement (1).

(3):\ The action of $S$ on the space ${\rm Trig}(\Lambda)$ preserves the metric.
Hence the function $|F_\theta|^2$ on $S$ is constant.
Integrating over $S$,
we obtain the (3).
\end{proof}
For $0\ne\lambda\in\Lambda$, we consider in %the space
${\rm Trig}(\Lambda)$ the vectors
$
\phi_\lambda(\theta)=\sqrt2\cos(\lambda\theta)$, $\,\psi_\lambda(\theta)=\sqrt2\sin(k\theta)$.
If $0\in\Lambda$ then we set $\phi_0(\theta)=\sqrt{2\pi}$.
\emph{The elements $\{\phi_\lambda,\psi_\lambda\}$ form an orthonormal basis in ${\rm Trig}(\Lambda)$.}
From this we obtain
$$
{\rm length}(\kappa(S))=
\int_S\sqrt{\frac{1}{\#\Lambda}\sum\nolimits_{\lambda\in\Lambda}\lambda^2}d\theta=
2\pi\sqrt{\frac{1}{\#\Lambda}\sum\nolimits_{\lambda\in\Lambda}\lambda^2}
$$
Applying proposition \ref{prCr} to the spherical curve $\kappa(S)$, we obtain statement (a) of the Theorem \ref{thmL2}.
\subsection{Multidimensional analog of Theorem \ref{thmL2}}\label{Laur3}
We now %temporarily
turn to the case of polynomials in several variables.
Namely we formulate a multidimensional analogue of Theorem \ref{thmL2} from \cite{K22}.

Let $\Lambda$ be a finite centrally symmetric subset of $\Z^n$.
We recall that a Laurent polynomial in $n$ variables with spectrum $\Lambda$ is a function
$\sum_{\lambda\in\Lambda}a_\lambda z^\lambda$ on the complex torus $(\C\setminus0)^n$.
A Laurent polynomial that takes real values on the compact subtorus
$$
T^n={z\in(\C\setminus{0})^n : z=(\E^{i\theta_1},\ldots,\E^{i\theta_n})}
$$
of $(\C\setminus{0})^n$ is called a \emph{real Laurent polynomial}.
If a Laurent polynomial equals zero at a point $z\in T^n$,
then $z$ is called the \emph{real zero} of the Laurent polynomial.

It turns out that the expected number of real zeros of a system of random real Laurent polynomials in $n$ variables with a fixed spectrum~$\Lambda$ equals the volume of a certain $n$-dimensional ellipsoid multiplied by $n!$.
Namely, we define \emph{the Newton ellipsoid} ${\rm Ell}(\Lambda)$ as an ellipsoid with a support function
$$
h(x)=\sqrt{\frac{1}{\#\Lambda}\sum\nolimits_{\lambda\in\Lambda}\lambda^2(x)},
$$
where each $\lambda$ is regarded as a linear functional on $\R^n$.
For $n=1$, the Newton ellipsoid is the segment $[-h(1),h(1)]$,
and Theorem~\ref{thmL2}(a) is recovered.

Applying Koushnirenko theorem for the number of solutions of a polynomial system,
we obtain the following formula for the probability $\mathcal P(\Lambda)$ that a root is real:
\begin{equation}\label{eqTrig}
  \mathcal P(\Lambda)=\frac{\vol\left({\rm Ell}(\Lambda)\right)}{\vol\left({\rm conv}(\Lambda)\right)}
\end{equation}
where ${\rm conv}(\Lambda)$ denotes the convex hull of the spectrum~$\Lambda$.
In the case $n=1$, this formula reproduces Theorem~\ref{thmL2}(b).
In~\cite{K22}, the limiting value of the probability for an increasing sequence of spectra~$\Lambda$ is computed,
yielding a multidimensional analogue of Theorem~\ref{thmL1}(c).

The probability of a root being real can also be defined for systems of Laurent polynomials with distinct spectra.
In this case, the volumes in the numerator and denominator of the fraction (\ref{eqTrig})
are replaced by the mixed volumes of the corresponding ellipsoids and polyhedra.
Mixed volumes of ellipsoids in connection with the problem of zeros of random systems of equations
first appeared in~\cite{ZK}.
\begin{remark}
By viewing Laurent polynomials as functions associated with representations of a torus,
one can describe an analogous phenomenon for representations of arbitrary reductive linear groups;
see~\cite{K25}.
\end{remark}
\section{FTA for exponential sums}\label{exp}
\subsection{Exponential sums of one variable}\label{exp1}
An exponential sum is a function in $\C$ of the form
$$
  f(z)=\sum\nolimits_{\lambda\in\Lambda\subset\C,\:c_\lambda\in\C}c_\lambda\E^{\bar\lambda z}
$$
The convex hull $\Delta$ of the finite set $\Lambda$ is called the \emph{Newton polygon} of the exponential sum.
The zero set of an exponential sum is always infinite.
For instance, the zeros of the function $\E^{2\pi i z} - 1$ are precisely the integers $\Z$.
The following result can be viewed as an analogue of the Fundamental Theorem of Algebra for exponential sums.
\begin{theorem}\label{thmexp}
Let $N(f,r)$ be the number of zeros of exponential sum $f$ in the disk $B_r$ of radius $r$ centered at the origin.
 Then
\begin{equation}\label{expMain}
 N(f,r)=\frac{r}{2\pi}\:l(f)+ O(1),
\end{equation}
where $l(f)$ is the semiperimeter of the Newton polygon $\Delta$ of $f$.
\end{theorem}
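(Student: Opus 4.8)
The plan is to count the zeros through the argument principle and to reduce everything to the way the dominant exponential term rotates around a large circle. Since $f$ is entire, $N(f,r)$ equals $\frac{1}{2\pi}$ times the total increment of $\arg f(z)$ as $z=r\E^{i\theta}$ runs once around $|z|=r$. The key observation is that on this circle the modulus of the term $c_\lambda\E^{\bar\lambda z}$ equals $|c_\lambda|\exp\bigl(r\langle\lambda,u(\theta)\rangle\bigr)$, where $u(\theta)=(\cos\theta,\sin\theta)$ and $\lambda$ is read as a point of $\R^2$. Consequently the term that dominates in the direction $\theta$ is the one attached to the vertex of $\Delta$ that faces $u(\theta)$, i.e. the vertex at which the support function $H_\Delta(\theta)=\max_{\lambda\in\Lambda}\langle\lambda,u(\theta)\rangle$ is attained.

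First I would note that the finitely many edge-normal directions of $\Delta$ cut the circle into arcs, and that on the interior of each arc a single vertex $v$ beats every other exponent strictly, so that $f=c_v\E^{\bar v z}\bigl(1+\varepsilon\bigr)$ with $\varepsilon$ exponentially small away from the arc's endpoints. On such an arc the increment of $\arg f$ therefore equals the increment of $\arg\E^{\bar v z}=\im(\bar v z)=r\,\im(\bar v\E^{i\theta})$, up to a bounded error. Writing the critical angles cyclically as $\theta_1<\dots<\theta_p$ and letting $v_j$ be the vertex dominating on $(\theta_{j-1},\theta_j)$, the leading part of the total increment becomes $r\,\im\sum_j \bar v_j\bigl(\E^{i\theta_j}-\E^{i\theta_{j-1}}\bigr)$.

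Next, a discrete summation by parts turns this vertex sum into a sum over the edges of $\Delta$: it equals $-r\,\im\sum_j(\bar v_{j+1}-\bar v_j)\E^{i\theta_j}$, and since the edge with outer normal $\E^{i\theta_j}$ has vector $v_{j+1}-v_j=i\ell_j\E^{i\theta_j}$ with $\ell_j$ its length, each summand collapses to $(\bar v_{j+1}-\bar v_j)\E^{i\theta_j}=-i\ell_j$. Hence the leading part equals $r\sum_j\ell_j$, the length of $\partial\Delta$, and dividing by $2\pi$ reproduces the coefficient $l(f)$ in (\ref{expMain}); as a cross-check, the same constant arises from Cauchy's formula $\int_0^{2\pi}H_\Delta(\theta)\,d\theta=\mathrm{length}(\partial\Delta)$ combined with Jensen's identity.

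The hard part will be controlling the transition zones. Near each critical angle two vertices have comparable modulus, and within an angular window of width $O(1/r)$ the phase of $f$ can genuinely wind; the heart of the argument is to show that each such window contributes only $O(1)$ to the increment of $\arg f$, uniformly in $r$, which is exactly what produces the sharp error term in (\ref{expMain}) rather than a larger one. This demands a careful two-term analysis $f\approx c_v\E^{\bar v z}+c_{v'}\E^{\bar v' z}$ across each edge normal, together with some bookkeeping to absorb the degenerate situations—exponents lying in the relative interior of an edge of $\Delta$, a vanishing vertex coefficient, or a Newton polygon that degenerates to a segment—none of which affect the leading term but all of which must be checked to remain within $O(1)$.
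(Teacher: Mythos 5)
Your route is genuinely different from the paper's. The paper never integrates $\arg f$ over the large circle: it first localizes the zeros themselves, showing they all lie within a bounded distance $R$ of the rays spanned by the outward normals to the sides of $\Delta$, then counts zeros separately in each strip $U_{j,R}$ by the argument principle, and sums over the sides. Your decomposition is the dual one: arcs of the circle indexed by the vertices of $\Delta$, with the sides entering only through the Abel summation $-\sum_j(\bar v_{j+1}-\bar v_j)\E^{i\theta_j}=i\sum_j\ell_j$. That leading-term computation, and the Cauchy-formula cross-check, are correct.

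The genuine gap is exactly where you say ``the hard part'' is: the uniform $O(1)$ bound for the transition windows is announced, not proved, and it is the entire content of the theorem --- everything before it only yields $N(f,r)=\frac{r}{2\pi}\sum_j\ell_j+o(r)$, which is the easy statement. Moreover, the proposed two-term analysis cannot be the whole mechanism: an edge of $\Delta$ may carry further spectrum points in its relative interior, and even for a genuine two-vertex edge the local picture of $f$ near the crossing point $r\E^{i\theta_j}$ varies with $r$ (quasi-periodically when the frequency differences are incommensurable), so a bound uniform in $r$ does not follow from any single finite computation. What is needed is precisely the Lemma on which the paper's proof rests: for every compact $A\subset\C$ there is a constant $C(A)$ such that every translate $z+A$ contains at most $C(A)$ zeros of $f$. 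The paper obtains this by compactness: translation acts projectively on the projectivization $\P$ of the space of exponential sums with spectrum $\Lambda$, and if the bound failed, a subsequence of translates of $f$ would converge in $\P$ to an exponential sum with infinitely many zeros in $A$, which is impossible. Granting that lemma, each of your windows --- a region of bounded diameter, independent of $r$ --- contributes $O(1)$ and your argument closes; without it, or an equivalent normal-families statement, your sketch stops at $o(r)$.

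Separately, there is a factor-of-$2$ discrepancy that you pass over silently: your computation gives $r\sum_j\ell_j$, i.e.\ $r$ times the \emph{full} perimeter of $\Delta$, and you then declare $\frac{1}{2\pi}\sum_j\ell_j$ to be the coefficient $l(f)$ of \eqref{expMain}, although $l(f)$ is defined there as the \emph{semi}perimeter; these differ by a factor of $2$. Test the constant on $f(z)=\sin z$: there are $\frac{2}{\pi}r+O(1)$ zeros in $B_r$, the Newton polygon is the segment $[-i,i]$, and it is the full degenerate perimeter $4$ that gives $\frac{r}{2\pi}\cdot 4=\frac{2r}{\pi}$, whereas the semiperimeter gives half of that. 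So your leading constant is the one consistent with this example; the mismatch is with the normalization in the statement (note that the paper's own proof weights each side by half its length), and you must address it explicitly rather than assert that the two agree.
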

\begin{remark}
If the convex polygon $\Delta$ degenerates to a line segment, then its perimeter approaches twice the length of that segment.
Hence, when the Newton polygon of $f$ is a segment, the quantity $l(f)$ in~\eqref{expMain} equals the length of this segment.
\end{remark}
\begin{remark}
After the change of variables $z\mapsto\E^z$, the Laurent polynomial becomes an exponential sum.
Therefore, Theorem 4 implies FTA.
\end{remark}
\emph{Proof of Theorem \ref{exp}.}
Let $r_1,\ldots,r_N$ be the rays generated by the outward normals to the sides
$\Delta_1,\ldots,\Delta_N$ of the Newton polygon $\Delta$ associated with the exponential sum $f$.
For each $j$, let $U_{j,R}$ denote the set of points in~$\C$ whose distance from~$r_j$ does not exceed~$R$.
If $R$ is sufficiently large, then all zeros of~$f$ lie in
$\bigcup_{1\le j\le N} U_{j,R}$.
Denote by $N_j(f,r)$ the number of zeros of~$f$ contained in~$U_{j,R}$.
Using the argument principle, one readily obtains
\begin{equation}\label{side}
  N_j(f,r) = \frac{r}{2\pi}\, l_j(f) + o(r),
\end{equation}
where $l_j(f)$ is half the length of the side~$\Delta_j$.
Summing over~$j$, we obtain the asymptotic relation~\eqref{expMain} for the leading term.
It remains to show that the $o(r)$ term in~\eqref{side} can in fact be replaced by an $O(1)$ term.
This follows from the next statement.
\begin{lemma}
For any compact set $A \subset \C$ there exists a constant $C(A) > 0$ such that,
for all $z \in \C$, the number of zeros of the exponential sum~$f$ in the set $z + A$
does not exceed~$C(A)$.
\end{lemma}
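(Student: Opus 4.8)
The plan is to establish a uniform local bound on the number of zeros of an exponential sum by controlling its growth and invoking the Jensen-type argument principle. The key observation is that the function $f(z)=\sum_{\lambda\in\Lambda}c_\lambda\E^{\bar\lambda z}$ is entire of exponential type, and that a translate $f(z+w)$ for fixed $w$ is again an exponential sum with the \emph{same} spectrum $\Lambda$ but with modified coefficients $c_\lambda\E^{\bar\lambda w}$. This translation-invariance of the spectrum is what will ultimately make the bound $C(A)$ independent of $z$, and I would build the entire argument around exploiting it.

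First I would reduce the problem to counting zeros in a disk: since $A$ is compact, it is contained in some disk $B_\rho(a)$, and it suffices to bound the number of zeros of $f$ in any translate $z+B_\rho(a)$ uniformly in $z$. By Jensen's formula (or the standard consequence of the argument principle relating zero-counting to the logarithmic growth of $|f|$), the number of zeros of an analytic function $g$ in a disk $B_\rho$ is controlled by $\log\max_{|\zeta|=2\rho}|g(\zeta)| - \log|g(0)|$, provided $g(0)\ne 0$. I would apply this to $g(\zeta)=f(z+\zeta)$. The numerator term is easy: because every $\E^{\bar\lambda\zeta}$ has modulus bounded on the fixed disk $|\zeta|\le 2\rho$, and the coefficients of the translated sum are $c_\lambda\E^{\bar\lambda z}$, one gets $\max_{|\zeta|=2\rho}|f(z+\zeta)|\le \sum_\lambda|c_\lambda|\,\E^{\re(\bar\lambda z)}\cdot\E^{2\rho|\lambda|}$, which grows like the supremum of $\E^{\re(\bar\lambda z)}$ over $\lambda\in\Lambda$, i.e. like $\E^{h_\Delta(\bar z)}$ where $h_\Delta$ is the support function of the Newton polygon.

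The hard part will be the lower bound on $|f|$ at a suitable base point, since $f$ can be extremely small (indeed zero) at individual points, and a naive $\log|g(0)|$ term is uncontrollable. The standard remedy is not to evaluate at a single center but to use an averaged or shifted version of Jensen's formula over a slightly larger disk, or equivalently to bound the counting function by the difference of $\log\max$ on two concentric circles of radii comparable to $\rho$; this replaces the problematic pointwise lower bound by an integral mean $\frac{1}{2\pi}\int_0^{2\pi}\log|f(z+2\rho\E^{i\theta})|\,d\theta$, which by the Jensen--Poisson inequality is itself bounded below in terms of the growth of $f$ on a concentric circle and the value of $|f|$ at \emph{some} nearby point where it is not too small. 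The crucial structural input that makes this uniform is again that the dominant exponential term governs the modulus on most of the circle: away from a controlled neighborhood of the vanishing locus, $|f(z+\zeta)|$ is comparable to $\E^{h_\Delta(\re(\bar z+\bar\zeta))}$, so the mean of $\log|f|$ over a full circle of radius $\sim\rho$ differs from the mean of $\re(\bar\lambda^*(z+\zeta))$ (for the dominant vertex $\lambda^*$) by a quantity bounded uniformly in $z$.

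Putting these together, both $\log\max$ and the mean of $\log|f|$ over circles of radius comparable to $\rho$ are, up to an additive constant depending only on $\Lambda$, $\{c_\lambda\}$ and $\rho$, equal to the same leading linear-in-$z$ quantity $h_\Delta(\re\bar z)$; their difference — which is what bounds the zero count — is therefore $O(1)$ with a constant $C(A)$ independent of the translation $z$. I would remark that this is precisely the phenomenon that upgrades the $o(r)$ in~\eqref{side} to $O(1)$: the zeros in each tube $U_{j,R}$ accumulate along the normal ray with asymptotically constant linear density, and the uniform local bound prevents any bounded region from absorbing more than $O(1)$ extra zeros, so the error in each $N_j(f,r)$ is genuinely bounded rather than merely sublinear.
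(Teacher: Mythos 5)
Your framework (Jensen/argument-principle counting, translation preserving the spectrum while multiplying coefficients by $\E^{\bar\lambda z}$) is sound, and your upper bound $\max_{|\zeta|\le 2\rho}|f(z+\zeta)|\le C\,\E^{h(z)}$, where $h(z)=\max_{\lambda:\,c_\lambda\ne 0}\re(\bar\lambda z)$, is correct and easy. But the proof has a genuine gap exactly where you yourself locate "the hard part": the uniform-in-$z$ lower bound. You dispose of it by asserting that $|f|$ is, "away from a controlled neighborhood of the vanishing locus," comparable to $\E^{h(z)}$, and that the circular mean of $\log|f|$ is therefore within $O(1)$ of $h(z)$. This assertion is circular: how large a neighborhood of the zero set one must delete, and how badly zeros can cluster inside the circle of integration, is precisely what the lemma is supposed to control. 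Nothing in your argument rules out that, for a sequence of centers $z_k$, the zeros of $f$ crowd into $B_\rho(z_k)$ and drag the mean of $\log|f|$ to $-\infty$ relative to $h(z_k)$; the Jensen machinery then yields no bound at all. So the step "the value of $|f|$ at \emph{some} nearby point where it is not too small" is not a remedy but a restatement of the claim to be proved.

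The gap is fillable, and the cleanest filling reveals why the paper's argument looks the way it does. What you need is: there exists $c>0$, depending only on $\Lambda$, $\{c_\lambda\}$ and $\rho$, such that $\max_{|\zeta|\le\rho}|f(z+\zeta)|\ge c\,\E^{h(z)}$ for \emph{all} $z$. Since $f(z+\zeta)=\sum_\lambda\bigl(c_\lambda\E^{\bar\lambda z}\bigr)\E^{\bar\lambda\zeta}$, this is the statement that on the finite-dimensional space of exponential sums with spectrum $\Lambda$, the sup-norm over the fixed disk $\bar B_\rho$ and the coefficient norm $\max_\lambda|d_\lambda|$ are equivalent — true because the space is finite-dimensional and restriction to $\bar B_\rho$ is injective (exponentials with distinct frequencies are linearly independent on any open set). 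With that in hand, center Jensen's formula at a near-maximum point $\zeta_0\in\bar B_\rho(z)$ and your $O(1)$ bound follows. Note, however, that this norm equivalence is itself a compactness argument on the (projectivized) coefficient space, and the paper uses that compactness directly and more economically: if translates $\rho(w_k)f$ had unboundedly many zeros in $A$, a subsequence would converge in the projective space $\P$ of coefficient vectors to some $f_\infty\ne 0$, which by Hurwitz's theorem would have infinitely many zeros in the compact set $A$ — impossible for a nonzero entire function. The paper thus needs no growth estimates, no Jensen formula, and no Newton polygon; your route can be made to work, but only after importing the same compactness input that constitutes the paper's entire proof.
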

\begin{proof}
Consider the space of exponential sums with a fixed spectrum~$\Lambda$,
and let $\P$ denote its projectivization.
We identify points of~$\P$ with exponential sums defined up to a nonzero scalar factor.
Under this identification, the shift action of~$\C$ on exponential sums,
given by $w \colon f(z) \mapsto f(z + w)$,
induces a projective representation
$$
  \rho \colon \C \to \mathrm{Aut}(\P)
$$
of the additive group~$\C$ in the projective space~$\P$.

Suppose, to the contrary, that the constant $C(A)$ does not exist.
Then there exists a sequence of points $w_1, w_2, \ldots$ in~$\C$
such that the number of zeros of the function $f(w_k + z)$ in the set~$A$
tends to infinity as $k \to \infty$.
We can choose this sequence so that the corresponding sequence $\rho(w_k)(f)$
converges in~$\P$ to a limit~$f_\infty$.
However, this would imply that the exponential sum~$f_\infty$ has infinitely many zeros in~$A$,
which is impossible.
\end{proof}
\subsection{Exponential sums of several variables}\label{exp2}
Now let us turn to the case of exponential sums in several variables
and briefly describe the multidimensional analogue of Theorem~\ref{thmexp};
see, for example,~\cite{exp}.
Recall that an exponential sum in~$n$ variables is a function on~$\C^n$ of the form
$$
f(z)=\sum_{\lambda\in\Lambda,\:c_\lambda\in \C} c_\lambda \E^{\lambda(z)},
$$
where $\Lambda$ is a finite subset of the dual space~$\C^{n*}$ consisting of linear functionals on~$\C^n$.
In what follows, we refer to~$\Lambda$ as the \emph{spectrum} of the exponential sum~$f$.

For a tuple of $n$ exponential sums $F = (f_1, \ldots, f_n)$ with a common spectrum~$\Lambda$,
let $N(F, r)$ denote the number of common isolated zeros of the exponential sums
$f_1, \ldots, f_n$ inside the ball~$B_r$ of radius~$r$ centered at the origin.
The following statement is a multidimensional analogue of Theorem~\ref{thmexp}.
\begin{assert}
For almost all tuples of exponential sums $F = (f_1, \ldots, f_n)$ with spectrum~$\Lambda$, one has
$$
N(F,r)={\rm pvol}\left({\rm conv}(\Lambda)\right)\frac{r^n}{(2\pi)^n}+O(r^{n-1}),
$$
where $\mathrm{pvol}(\Delta)$ denotes the \emph{pseudovolume} of the convex polyhedron
$\Delta \subset \C^{n*}$, as defined below in Definition~\ref{dfpvol}.
\end{assert}
\begin{definition}\label{dfpvol}
Let $\Delta$ be a convex polyhedron in the space~$\C^{n*}$.
Define
$$
{\rm pvol}(\Delta)=
%\sigma_n
\sum_{\Gamma\subset\Delta,\:\dim(\Gamma)=n}c(\Gamma)\:A(\Gamma)\:\vol_n(\Gamma),
$$
where the summation is taken over all $n$-dimensional faces~$\Gamma$ of the polyhedron~$\Delta$,
and the quantities $\vol_n(\Gamma)$, $A(\Gamma)$, and $c(\Gamma)$ are defined as follows.
\begin{enumerate} %[(i)]
  \item
  $\vol_n(\Gamma)$ is the $n$-dimensional volume of the face~$\Gamma$.

  \item $A(\Gamma)$ is the \emph{exterior angle} of~$\Gamma$,
  i.e., the angle of the dual cone~$K_\Gamma$ corresponding to the face~$\Gamma$
  (the full $n$-dimensional angle is normalized to~$1$).
  Recall that $K_\Gamma$ consists of those points $w \in \C^n$
  for which the linear functional $f(z) = \Re\, z(w)$, considered as a function on~$\Delta$,
  attains its maximum simultaneously at all points of the face~$\Gamma$.

  \item $c(\Gamma) = \cos(T_\Gamma^\perp, \sqrt{-1}\,T_\Gamma)$
  is the cosine of the angle between the subspaces $T_\Gamma^\perp$ and $\sqrt{-1}\,T_\Gamma$,
  where $T_\Gamma \subset \C^{n*}$ is the tangent space to the face~$\Gamma$,
  and $T_\Gamma^\perp \subset \C^{n*}$ is its orthogonal complement.
  Recall that the cosine of the angle between two $n$-dimensional subspaces is,
  by definition, the area distortion coefficient of~$\vol_n$
  under the orthogonal projection of one subspace onto the other.
\end{enumerate}
\end{definition}
\begin{example}\label{ex3}

\begin{enumerate}
  \item If $\Delta \subset {\rm re}(\C^{n*})$, then
    $\mathrm{pvol}(\Delta) = \vol_n(\Delta)$.

  \item If $n = 1$, then $\mathrm{pvol}(\Delta)$ equals the semiperimeter of the polygon~$\Delta$,
  in accordance with Theorem~\ref{exp}.
\end{enumerate}
\end{example}

\begin{thebibliography}{References}

\bibitem[1]{Littl1} J. Littlewood and A. Offord. On the number of real roots of a random algebraic equation, J.
London Math. Soc. 13 (1938), 288–295

\bibitem[2]{Littl2} J. Littlewood and A. Offord. On the number of real roots of a random algebraic equation II,
J. Mathematical Proceedings of the Cambridge Philosophical Society, Volume 35, Issue 2, April 1939, pp. 133 - 148

\bibitem[3]{Littl3} J. Littlewood and A. Offord. On the number of real roots of a random algebraic equation III,
Recueil Mathématique (Nouvelle série), 1943, 12(54), Number 3, Pages 277–286

\bibitem[4]{KA} M. Kac. On the average number of real roots of a random algebraic equation,
Bull. Amer. Math. Soc.,1943, vol. 49,  314-320
(Correction: Bull. Amer. Math. Soc., Volume 49, Number 12 (1943), 938--938)

\bibitem[5]{EK}
A. Edelman, E. Kostlan.
How many zeros of a real random polynomial are real?
Bull. Amer. Math. Soc.,
1995,
vol. 32, 1--37

\bibitem[6]{ADG}
Jurgen Angst, Federico Dalmao and Guillaume Poly.
On the real zeros of random trigonometric polynomials with dependent coefficients,
Proc. Amer. Math. Soc.,
2019, vol. 147, 205--214

\bibitem[7]{K22}
Kazarnovskii B. Ja.
How many roots of a system of random Laurent polynomials are real?
Sbornik: Mathematics, 2022, Volume 213, Issue 4, Pages 466–475

\bibitem[8]{ZK}
D.\,Zaporozhets, Z.\,Kabluchko. Random determinants, mixed volumes of ellipsoids,
and zeros of Gaussian random fields, Journal of Math. Sci., vol. 199, no.2 (2014), 168--173

\bibitem[9]{K25}
B. Kazarnovskii.
On real roots of polynomial systems of equations in the context of group theory, arXiv:2208.14711

\bibitem[10]{exp}
B. Kazarnovskii.
On the exponential algebraic geometry.
Russian Mathematical Surveys, 2025, Volume 80, Issue 1, Pages 1–49
\end {thebibliography}
\end{document}